\theoremstyle{plain}
\newtheorem{theorem}{Theorem}
\numberwithin{equation}{section}
\begin{document}
\title{A short proof of Combinatorial Nullstellensatz}
\author{Mateusz Micha\l ek}
\address{Mathematical Institute of the Polish Academy of Sciences, ul. \'{S}%
niadeckich 8, 00-950 Warszawa, Poland}
\email{wajcha2@poczta.onet.pl}
\address{Unit\'{e} Mixte de Recherche 5582 CNRS --- Universit\'{e} Grenoble
1 \\
100 rue des Maths, BP 74, 38402 St Martin d'H\`{e}res, France}
\maketitle

In this note we give a short, direct proof of the following theorem of Alon 
\cite{AlonCN}.

\begin{theorem}
\label{CombNull}\emph{(Combinatorial Nullstellensatz \cite{AlonCN})} Let $%
\mathbb{F}$ be an arbitrary field, and let $P(x_{1},\ldots ,x_{n})$ be a
polynomial in $\mathbb{F}[x_{1},\ldots ,x_{n}]$. Suppose the degree $\deg (P)
$ of $P$ is $\sum_{i=1}^{n}k_{i}$, where each $k_{i}$ is a non-negative
integer, and suppose the coefficient of $x_{1}^{k_{1}}x_{2}^{k_{2}}\cdots
x_{n}^{k_{n}}$ in $P$ is non-zero. Then for any subsets $A_{1},\ldots ,A_{n}$
of $\mathbb{F}$ satisfying $\left\vert A_{i}\right\vert \geq k_{i}+1$ for
all $i=1,2,\ldots ,n$, there are $a_{1}\in A_{1},\ldots ,a_{n}\in A_{n}$ so
that $P(a_{1},\ldots ,a_{n})\neq 0$.
\end{theorem}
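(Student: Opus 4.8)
The plan is to argue by contradiction: suppose $P$ vanishes at every point of $A_1 \times \cdots \times A_n$, and derive that the coefficient of $x_1^{k_1}\cdots x_n^{k_n}$ must then vanish, contradicting the hypothesis. First I would shrink each $A_i$ to a set of size exactly $k_i+1$, which only makes the statement harder, and introduce the monic polynomials $g_i(x) = \prod_{a \in A_i}(x-a) = x^{k_i+1} - h_i(x)$ with $\deg h_i \le k_i$. The point of $g_i$ is that $a^{k_i+1} = h_i(a)$ for every $a \in A_i$, so replacing any occurrence of $x_i^{k_i+1}$ inside $P$ by $h_i(x_i)$ does not change the value of $P$ at any point of the product set.

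The core step is a reduction: repeatedly perform these substitutions, in every variable, until the resulting polynomial $\bar{P}$ has degree at most $k_i$ in each $x_i$ separately. I would track three invariants through this process: (i) $\bar{P}$ still vanishes on $A_1 \times \cdots \times A_n$, since each substitution preserves values on the product set; (ii) no monomial of total degree exceeding $\sum k_i$ is ever created, because replacing $x_i^{e_i}$ with $e_i > k_i$ strictly lowers the $x_i$-degree, and hence the total degree, of that monomial; and (iii) consequently the coefficient of the top monomial $x_1^{k_1}\cdots x_n^{k_n}$ is never altered. Indeed, that monomial is never itself reduced, since all its exponents are $\le k_i$, and it can never be produced by reducing something else, because that something would need total degree $> \sum k_i = \deg P$, which does not exist. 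Hence the coefficient of $x_1^{k_1}\cdots x_n^{k_n}$ in $\bar{P}$ equals its coefficient in $P$, which is nonzero, so $\bar{P} \ne 0$.

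I expect step (iii), verifying that the reduction leaves the designated top coefficient untouched, to be the main obstacle, since it is exactly where the degree hypothesis $\deg P = \sum k_i$ is used, and where careless bookkeeping would let spurious contributions sneak in. The hypothesis guarantees that $x_1^{k_1}\cdots x_n^{k_n}$ sits strictly at the top, insulating it from the lower-degree debris produced by the substitutions.

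Finally I would close with an elementary lemma proved by induction on $n$: a nonzero polynomial with $\deg_{x_i} \le k_i$ for all $i$ cannot vanish on all of $A_1\times\cdots\times A_n$ when $|A_i| = k_i+1$. Writing $\bar{P} = \sum_{j=0}^{k_n} Q_j(x_1,\ldots,x_{n-1})\, x_n^{j}$ and fixing $(a_1,\ldots,a_{n-1})$ in the first $n-1$ sets, the resulting univariate polynomial in $x_n$ has degree $\le k_n$ yet vanishes at $k_n+1$ points, forcing every $Q_j(a_1,\ldots,a_{n-1}) = 0$. Thus each $Q_j$ vanishes on $A_1\times\cdots\times A_{n-1}$ and has the right partial degrees, so the inductive hypothesis gives $Q_j \equiv 0$ and hence $\bar{P} \equiv 0$. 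This contradicts $\bar{P} \ne 0$, completing the argument.
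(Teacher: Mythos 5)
Your proof is correct, but it follows a genuinely different route from the paper: what you describe is essentially Alon's original argument from \cite{AlonCN}, via reduction modulo the grid polynomials $g_i(x_i)=\prod_{a\in A_i}(x_i-a)$, whereas the paper gives a short induction on $\deg P$. Your three invariants are exactly right, and the delicate one, (iii), is sound: each substitution replaces a monomial by terms of strictly smaller total degree, so since $\deg P=\sum_i k_i$ no monomial of degree exceeding $\sum_i k_i$ ever appears, and the coefficient of $x_1^{k_1}\cdots x_n^{k_n}$, which is never itself reduced, can receive no contributions. The only detail you leave implicit is termination of the rewriting; it is immediate (each substitution strictly lowers the $x_i$-degree of the monomial it acts on), or one can sidestep it by performing a single division with remainder by $g_i$ in each variable in turn, since reduction in $x_j$ does not raise the other partial degrees. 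Your closing lemma --- a nonzero polynomial with $\deg_{x_i}\leq k_i$ for all $i$ cannot vanish on a grid with $\lvert A_i\rvert=k_i+1$ --- is correctly proved by induction on $n$ and uses that $\mathbb{F}$ has no zero divisors. The paper instead fixes $a\in A_1$ (with $k_1>0$), writes $P=(x_1-a)Q+R$ with $R$ free of $x_1$, deduces that $R$ vanishes on the whole grid and hence $Q$ vanishes on $(A_1\setminus\{a\})\times A_2\times\cdots\times A_n$, and applies the inductive hypothesis to $Q$, whose coefficient of $x_1^{k_1-1}x_2^{k_2}\cdots x_n^{k_n}$ is nonzero and whose degree is $\deg P-1$. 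The paper's induction is shorter and avoids all reduction bookkeeping; your route buys more structure, since the reduced polynomial $\bar P$ is exactly the mechanism behind the ideal-membership form of the Nullstellensatz in \cite{AlonCN} (if $P$ vanishes on the grid then $P\in(g_1,\ldots,g_n)$), which underlies the quantitative refinements, while the paper's argument yields only the nonvanishing conclusion. Both approaches work verbatim over any integral domain, as the paper remarks.
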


\begin{proof}
The proof is by induction on the degree of $P$. If $\deg (P)=1$ the theorem is trivial. Suppose $\deg (P)>1$ and $P$ satisfies the
assumptions of the theorem but the assertion is false, that is $P(x)=0$ for
every $x\in A_{1}\times \ldots \times A_{n}$. Without loss of generality $k_1>0$. Fix $a\in A_{1}$ and write%
\begin{equation}
P=(x_{1}-a)Q+R  \tag{*}
\end{equation}%
using the usual long division algorithm for polynomials. Formally equation
(*) is an identity in the ring of polynomials in one variable $x_{1}$ with
coefficients in the ring $\mathbb{F}[x_{2},\ldots ,x_{n}]$. Since the degree of $%
R$ in the variable $x_{1}$ is strictly less than $\deg (x_{1}-a)$, the
polynomial $R$ does not contain $x_{1}$ at all. By the assumption on $P$ it
follows that $Q$ must have a nonvanishing monomial of the form $%
x_{1}^{k_{1}-1}x_{2}^{k_{2}}\cdots x_{n}^{k_{n}}$ and $\deg
(Q)=\sum_{i=1}^{n}k_{i}-1=\deg (P)-1$.

Take any $x\in \{a\}\times A_{2}\times \ldots \times A_{n}$ and substitute
to the equation (*). Since $P(x)=0$ it follows that $R(x)=0$, too. But $R$
does not contain $x_{1}$, hence $R$ vanishes also on $(A_{1}-\{a\})\times
A_{2}\times \ldots \times A_{n}$. Now substitute any $x\in
(A_{1}-\{a\})\times A_{2}\times \ldots \times A_{n}$ to (*). Since $x_{1}-a$
is non-zero, it follows that $Q(x)=0$. So $Q$ vanishes on $%
(A_{1}-\{a\})\times A_{2}\times \ldots \times A_{n}$, which contradicts the
inductive assumption.
\end{proof}

The theorem is also true when $\mathbb{F}$ is any integral domain. If we
omit the integrality assumption and take any commutative ring with unity as $%
\mathbb{F}$, then the proof works if for any two elements $a,b\in A_{i}$, ($%
i=1,\ldots ,n)$, the difference $a-b$ is not a zero divisor in $\mathbb{F}$.

Theorem \ref{CombNull} has many useful applications in Combinatorics, Graph
Theory, and Additive Number Theory (see \cite{AlonCN}). As an illustration
we apply it in the proof of Erd\"os Heilbronn Conjecture. It was first proved by da Silva and
Hamidoune, and then by Alon, Nathanson and Ruzsa in a more general setting.

\begin{theorem}
Let $p$ be a prime and let $A$ and $B$ be two non-empty subsets of $\mathbb{Z%
}_{p}$. Let 
\begin{equation*}
C=\{x\in \mathbb{Z}_{p}:x=a+b\text{ for some }a\in A,b\in B,a\neq b\}.
\end{equation*}%
Then 
\begin{equation*}
\left\vert C\right\vert \geq \min (p,\left\vert A\right\vert +\left\vert
B\right\vert -3).
\end{equation*}
\end{theorem}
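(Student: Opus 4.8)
The plan is to argue by contradiction using Theorem \ref{CombNull}. Suppose $|C| < \min(p,|A|+|B|-3)$; then simultaneously $|C| \le |A|+|B|-4$ and, crucially, $|C| \le p-1$. Writing $m = |C|$, I would consider the polynomial
\[
P(x,y) = (x-y)\prod_{c\in C}(x+y-c) \in \mathbb{Z}_p[x,y],
\]
which has degree $m+1$. The first step is to check that $P$ vanishes on all of $A\times B$: for $(a,b)\in A\times B$ either $a=b$, killing the factor $x-y$, or $a\ne b$, in which case $a+b\in C$ and one of the factors $x+y-c$ vanishes. If I can then exhibit a top-degree monomial $x^{k_1}y^{k_2}$ with $k_1+k_2 = m+1$, nonzero coefficient, and $k_1\le |A|-1$, $k_2\le |B|-1$, then Theorem \ref{CombNull} (applied with $A_1=A$, $A_2=B$) produces $(a,b)\in A\times B$ with $P(a,b)\ne 0$, contradicting the vanishing just established.

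The coefficient of $x^{k_1}y^{k_2}$ with $k_1+k_2=m+1$ in $P$ agrees with its coefficient in the top homogeneous part $(x-y)(x+y)^m$, which a short expansion shows equals $\binom{m}{k_1-1}-\binom{m}{k_1}$. So the task reduces to choosing $k_1$ in the admissible range $\max(0,\,m+2-|B|)\le k_1 \le \min(m+1,\,|A|-1)$ for which this difference is nonzero in $\mathbb{Z}_p$. Here the inequality $m=|C|\le p-1$ does the decisive work: since $m<p$, every $\binom{m}{j}$ with $0\le j\le m$ is a unit modulo $p$, so for $1\le k_1\le m$ the difference vanishes exactly when the ratio $(m-k_1+1)/k_1\equiv 1$, that is, when $2k_1\equiv m+1\pmod p$; the boundary values $k_1=0$ and $k_1=m+1$ always give $\mp 1\ne 0$.

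The main obstacle — and the heart of the argument — is guaranteeing that a good $k_1$ actually lies in the admissible range. I would verify that the hypothesis $|C|\le |A|+|B|-4$ forces this range to contain at least two consecutive integers, or else to include one of the automatically-good endpoints $0$ or $m+1$. For $p$ odd the congruence $2k_1\equiv m+1\pmod p$ singles out a single residue class, so among two consecutive integers at least one avoids it and yields a nonzero coefficient; feeding this choice into Theorem \ref{CombNull} gives the contradiction. The sole remaining case $p=2$ I would dispose of by hand, since then the only nontrivial instance is $A=B=\mathbb{Z}_2$, where $|C|=1$ already meets the asserted bound.
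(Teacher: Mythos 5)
Your argument is correct, and although it runs on the same engine as the paper's --- the polynomial $(x-y)\prod_{c}(x+y-c)$, the top-degree coefficients $\binom{m}{k_{1}-1}-\binom{m}{k_{1}}$, and the trick of testing two consecutive exponents so that the single bad residue class $2k_{1}\equiv m+1 \pmod p$ (for odd $p$) can kill at most one of them --- its architecture is genuinely different in two respects. The paper first disposes of the case $\min (p,\left\vert A\right\vert +\left\vert B\right\vert -3)=p$ by a separate counting argument ($A$ and $g-B$ share at least two elements, one of which differs from $g/2$), and only then invokes Theorem \ref{CombNull}; moreover, in that second case it pads $C$ out to a set $D$ with $\left\vert D\right\vert =\left\vert A\right\vert +\left\vert B\right\vert -4$ exactly, so that the degree is pinned at $\left\vert A\right\vert +\left\vert B\right\vert -3$ and the two candidate monomials $x^{\left\vert A\right\vert -1}y^{\left\vert B\right\vert -2}$ and $x^{\left\vert A\right\vert -2}y^{\left\vert B\right\vert -1}$ are admissible with no further analysis. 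You instead extract both bounds $m\leq \left\vert A\right\vert +\left\vert B\right\vert -4$ and $m\leq p-1$ from the single hypothesis $\left\vert C\right\vert <\min (p,\left\vert A\right\vert +\left\vert B\right\vert -3)$ and run one unified Nullstellensatz argument on $C$ itself; the price is that the degree $m+1$ is no longer pinned down, so you need the range-containment step, which does check out: if neither endpoint $0$ nor $m+1$ lies in $[\max (0,m+2-\left\vert B\right\vert ),\min (m+1,\left\vert A\right\vert -1)]$, then both bounds are the interior ones and the length is $\left\vert A\right\vert +\left\vert B\right\vert -m-3\geq 1$, yielding two consecutive interior values of $k_{1}$, of which at most one is bad when $p$ is odd. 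Your explicit appeal to $m\leq p-1$ to make every $\binom{m}{j}$ a unit is also the fact the paper needs but leaves implicit (there $\left\vert D\right\vert \leq p-2$). What the padding buys the paper is brevity and an instant choice of monomials; what your version buys is uniformity --- no separate pigeonhole case for $\min =p$ --- and a clear accounting of which inequality feeds which step. Your hand-check of $p=2$ is fine ($A=B=\mathbb{Z}_{2}$ gives $C=\{1\}$, meeting the bound $\min (2,1)=1$), and some such remark is genuinely needed, since for $p=2$ two consecutive integers can both satisfy the bad congruence; the paper likewise restricts its argument to $p>2$.
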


\begin{proof}
Only the case $p>2$ is interesting. If $\min (p,\left\vert A\right\vert
+\left\vert B\right\vert -3)=p$, then for any $g\in \mathbb{Z}_{p}$ the sets 
$A$ and $g-B$ have got at least two different elements in common. Let $a$ be
one that is different from $\frac{g}{2}$, then $g=a+b$ for some $b\in B$
different from $a$. This proves that $g\in C$, so $C=\mathbb{Z}_{p}$.
Suppose now that $\left\vert A\right\vert +\left\vert B\right\vert -3<p$ and
the theorem does not hold. In that case there exists a set $D$ such that $%
C\subseteq D$ and $\left\vert D\right\vert =\left\vert A\right\vert
+\left\vert B\right\vert -4$. We define two polynomials: 
\begin{equation*}
P(x,y)=\prod_{d\in D}(x+y-d)\quad \text{and}\quad Q(x,y)=P(x,y)(x-y).
\end{equation*}%
Clearly $P(a,b)=0$ for any $a\in A$, $b\in B$, $a\neq b$, so $Q(a,b)=0$ for
any $a\in A$, $b\in B$. One can also see that the coefficient of $x^{i}y^{j}$
in $P(x,y)$ is equal to $\binom{\left\vert D\right\vert }{{i}}$ if $%
i+j=\left\vert D\right\vert $. So if $i+j=\left\vert D\right\vert +1$ then
the coefficient of $x^{i}y^{j}$ in $Q(x,y)$ is equal to ${\binom{\left\vert
D\right\vert }{{i-1}}}-{\binom{\left\vert D\right\vert }{{i}}}$. This is
equal to zero if and only if $i=\frac{|D|+1}{2}$ in $\mathbb{Z}_{p}$. Since $%
\left\vert D\right\vert +1=\left\vert A\right\vert +\left\vert B\right\vert
-3$ one of the coefficients of either $x^{\left\vert A\right\vert
-1}y^{\left\vert B\right\vert -2}$ or $x^{\left\vert A\right\vert
-2}y^{\left\vert B\right\vert -1}$ is nonzero. Using Theorem \ref{CombNull}
and the fact that $\deg Q=|A|+|B|-3$ we get a contradiction.
\end{proof}
\section*{Acknowledgements}
I would like to thank Jarek Grytczuk for encouraging me to publish this paper.

\end{document}